\newtheorem{theorem}{Theorem}
\newtheorem{corollary}[theorem]{Corollary}
\theoremstyle{remark}
\theoremstyle{definition}
\begin{document}

\title{Binary words avoiding $xx^Rx$ and strongly unimodal sequences}

\author{James Currie and Narad Rampersad}

\address{Department of Mathematics and Statistics \\
University of Winnipeg \\
515 Portage Avenue \\
Winnipeg, Manitoba R3B 2E9 (Canada)}

\email{j.currie@uwinnipeg.ca, n.rampersad@uwinnipeg.ca}

\subjclass[2000]{68R15}

\date{\today}

\begin{abstract}
In previous work, Currie and Rampersad showed that the growth of the
number of binary words avoiding the pattern $xxx^R$ was intermediate
between polynomial and exponential.  We now show that the same holds
for the growth of the number of binary words avoiding the pattern
$xx^Rx$.  Curiously, the analysis for $xx^Rx$ is much simpler than
that for $xxx^R$.  We derive our results by giving a bijection between
the set of binary words avoiding $xx^Rx$ and a class of sequences
closely related to the class of ``strongly unimodal sequences.''
\end{abstract}

\maketitle

\section{Introduction}
In this paper we give an exact characterization of the binary words
avoiding the pattern $xx^Rx$.  Here the notation $x^R$ denotes the
\emph{reversal} or \emph{mirror image} of $x$.  For example, the word
$0101\,1010\,0101$ is an instance of $xx^Rx$, with $x=0101$.  The set
of binary words avoiding the related pattern $xxx^R$ has been the
subject of recent study.  This study began with the work of Du,
Mousavi, Schaeffer, and Shallit \cite{DMSS14}, who observed that there
exist infinite periodic binary words that avoid $xxx^R$ and provided
an example of an aperiodic infinite binary word that avoids $xxx^R$.
Answering a question of Du et al., the present authors derived upper
and lower bounds for the number of binary words of length $n$ that
avoid $xxx^R$ and showed that the growth of this quantity was neither
polynomial nor exponential \cite{CR15}.  This result was the first
time such an intermediate growth rate had been shown in the context of
pattern avoidance.

In the present work we analyze the binary words avoiding $xx^Rx$.
Once more, we are able to show a growth rate for the number of such words
that is neither polynomial nor exponential.  Surprisingly, the
analysis is much simpler than what was required to show the analogous
result for the pattern $xxx^R$.  Indeed we are able to obtain a
complete characterization of the binary words avoiding $xx^Rx$ by
describing a correspondence between such words and sequences of
integers that are very closely related to \emph{strongly unimodal
  sequences} of integers.  These latter have recently been studied by
Rhoades \cite{Rho14}.  This correspondence provides a rather pleasing
connection between avoidability in words and the classical theory of
partitions.  Finally, we conclude this work with some remarks
concerning the non-context-freeness of the languages of binary words
that avoid the patterns $xxx^R$ and $xx^Rx$ respectively.

\section{Enumerating binary words avoiding $xx^Rx$}

Let
\[
L = \{w \in \{0,1\}^* : w \text{ avoids } xx^Rx\},
\]
and let $L = L_0 \cup L_1 $, where $L_0$ (resp.\ $L_1$) consists of
the words in $L$ that begin with $0$ (resp.\ $1$), along with the
empty word.

Note that the words in $L$ avoid both $000$ and $111$.
Observe that any $w \in \{0,1\}^*$ that avoids $000$ and $111$ has a
unique representation of the form
\begin{equation}\label{factorization}
w = A_0a_1a_1A_1a_2a_2A_2 \cdots A_{k-1}a_ka_kA_k,
\end{equation}
where each $A_i$ is a prefix (possibly empty) of either $010101\cdots$
or $101010\cdots$ and each $a_i \in \{0,1\}$.  Given such a
factorization, we define an associated sequence $f(w) =
(n_0,\ldots,n_k)$, where
\begin{itemize}
\item $n_0 = |A_0a_1|$,
\item $n_i = |a_iA_ia_{i+1}|$, for $0<i<k$, and
\item $n_k = |a_kA_k|$.
\end{itemize}

Let $X$ denote the set of all sequences of the form
$(d_1,d_2,\ldots,d_m)$ such that for some $j$, either
\begin{align*}
\text{(Type 1)} && 0 < d_1 < \cdots < d_{j-1} &< d_j > d_{j+1} > \cdots > d_m > 0,
\text{ or}\\
\text{(Type 2)} && 0 < d_1 < \cdots < d_{j-1} &= d_j > d_{j+1} > \cdots > d_m > 0.
\end{align*}
The \emph{weight} of any such sequence is the sum $d_1 +\cdots + d_m$.  We
also include the empty sequence in $X$.  Type~1 sequences
are called \emph{strongly unimodal sequences}.
Note that the set $X$ can also be equivalently defined as follows:
$X$ consists of all sequences $(d_1,d_2,\ldots,d_m)$ for which there
is no $j$ such that both $d_{j-1} \geq d_j$ and $d_j \leq d_{j+1}$.

We show the following.

\begin{theorem}\label{bijection}
The map $f$ defines a one-to-one correspondence between the words in
$L_0$ of length $n$ and the sequences in $X$ of weight $n$.
\end{theorem}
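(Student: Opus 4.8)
The plan is to isolate a single combinatorial Lemma that translates the pattern $xx^Rx$ into the ``no valley'' description of $X$, and then to read off injectivity and surjectivity of $f$ from the fact that a word can be rebuilt from its factorization data. First I would dispose of the bookkeeping. Every $w \in L$ avoids $000$ and $111$, so the factorization \eqref{factorization} applies, $f$ is well defined, and a direct count gives $\mathrm{weight}(f(w)) = |w|$ (each $A_i$ contributes $|A_i|$ and each doubled factor $a_ia_i$ contributes $2$, and $n_0 + \cdots + n_k$ totals exactly these). I would also record two positivity facts: for every $w$, $f(w)$ has $n_0, n_k \ge 1$ and interior entries $n_i \ge 2$, since $a_iA_ia_{i+1}$ already contains the two letters $a_i$ and $a_{i+1}$; and conversely every sequence of $X$ satisfies these same bounds, because strict unimodality forces the entries to be $\ge 2$ except possibly at the two ends.

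The core is the Lemma: a word avoiding $000$ and $111$ avoids $xx^Rx$ if and only if $f(w)$ has no valley, i.e.\ there is no $j$ with $n_{j-1} \ge n_j \le n_{j+1}$, which by the alternative description in the text is exactly $f(w) \in X$. To prove it I would reinterpret the pattern through palindromes. Let $c_1 < \cdots < c_k$ be the midpoints of the doubled factors $a_ia_i$, and set $c_0 = 0$, $c_{k+1} = |w|$, so that $n_i = c_{i+1} - c_i$. An occurrence of $xx^Rx$ with $|x| = \ell$ has the palindrome $xx^R$ as a prefix and $x^Rx$ as a suffix, giving two even-length palindrome centers a distance $\ell$ apart, each the center of a palindrome of radius at least $\ell$; conversely, two such centers force the factor spanning them to equal $x^Rxx^R$, an instance of the pattern. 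Now an even palindrome center of radius at least $1$ is a place carrying two equal adjacent letters, which in a word avoiding $000,111$ occurs only inside a doubled factor; so these centers are precisely the $c_i$, and since the word alternates between consecutive doubles the maximal palindrome radius at $c_i$ is $\min(n_{i-1}, n_i)$ (larger only in the degenerate case $n_{i-1} = n_i$), the mirror symmetry breaking at whichever neighboring double or word end is nearer.

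Granting this, both directions are short. If $f(w)$ has a valley $n_{i-1} \ge n_i \le n_{i+1}$, the consecutive centers $c_i$ and $c_{i+1}$ are a distance $n_i$ apart and each has radius $\ge n_i$, so $w$ contains the pattern. Conversely an occurrence supplies centers $c_p, c_q$ with $p < q$ and $\ell = c_q - c_p$, both of radius $\ge \ell$; since $\ell \ge n_p$ and the radius at $c_p$ is $\min(n_{p-1}, n_p)$, one gets $n_{p-1} \ge n_p$, and symmetrically $n_q \ge n_{q-1}$, so the index $r$ minimizing $n_p, \ldots, n_{q-1}$ is a valley. For the bijection, $f$ is injective on $L_0$ because $(n_0, \ldots, n_k)$ together with the leading letter $0$ determines the block lengths and then, by alternation with a phase flip at each double, every letter; and $f$ maps onto the weight-$n$ sequences of $X$ because the positivity facts above make the factorization data of any such sequence realizable by a word starting with $0$, which avoids $000, 111$ and, by the Lemma, avoids $xx^Rx$.

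I expect the main obstacle to be the radius computation behind the Lemma, especially handling the boundary doubles $c_1$ and $c_k$ (where the palindrome is cut off by an end of the word rather than by a neighbor) and the equal-gap ``tie'' cases, where the radius exceeds $\min(n_{i-1}, n_i)$. What saves the argument is that the two inequalities $n_{p-1} \ge n_p$ and $n_q \ge n_{q-1}$ only use the strict form of the radius formula, so the ties never obstruct the location of a valley.
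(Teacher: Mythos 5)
Your proposal is correct, and on the hard direction it takes a genuinely different route from the paper. The easy direction (a valley $n_{j-1}\ge n_j\le n_{j+1}$ forces an occurrence) is the same argument in different clothing: your two centers $c_j,c_{j+1}$ at distance $n_j$, each of radius at least $n_j$, produce exactly the paper's explicit factors $B_1=B_2^R=B_3$ of length $n_j$. For the converse, however, the paper does not analyze an arbitrary occurrence. It first observes that a valley in $f(w')$ for a factor $w'$ of $w$ gives a valley in $f(w)$, then reduces to a \emph{minimal} instance $w=vv^Rv$, and exploits reversal symmetry of the factorization: the block sequence of $v^R$ is the reverse of that of $v$, forcing $n_{j-1}=n_j=n_0$ and $n_{j+1}=n_1$, with minimality invoked to get $n_0<n_1$ and hence a valley at $j$. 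You instead take any occurrence, identify its two even-palindrome centers with doubles $c_p,c_q$ (possible since, in a word avoiding $000$ and $111$, adjacent equal letters occur only at the doubles), use the radius formula (maximal radius at $c_i$ is $\min(n_{i-1},n_i)$ except in the tie case) to extract $n_{p-1}\ge n_p$ and $n_q\ge n_{q-1}$, and then find a valley at an index minimizing $n_p,\dots,n_{q-1}$. What your route buys: no minimality reduction, no factor-closure step, and no need to justify the paper's somewhat terse claim that minimality forces $n_0<n_1$; you also make explicit the injectivity/surjectivity bookkeeping (reconstruction of $w$ from $f(w)$ and its first letter; realizability of any $X$-sequence via the bounds $n_0,n_k\ge 1$ and interior $n_i\ge 2$) that the paper leaves implicit in the phrase ``unique representation.'' What it costs: you must prove the palindrome radius formula carefully, including the boundary centers $c_1,c_k$ and ties $n_{i-1}=n_i$; as you correctly note, only the non-strict inequalities are used, so those degenerate cases never obstruct the argument.
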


\begin{proof}
Let $w\in\{0,1\}^*$ be a word starting with $0$.  Let
$f(w) = (n_0,\ldots,n_k)$ and let
\[
w = A_0a_1a_1A_1a_2a_2A_2 \cdots A_{k-1}a_ka_kA_k,
\]
be the factorization given in \eqref{factorization}.

We first show that if $f(w) \notin X$ then $w \notin L_0$.  Since
$f(w) \notin X$ there is some $j$ such that both $n_{j-1} \geq n_j$
and $n_j \leq n_{j+1}$.  Define $B_1$, $B_2$, and $B_3$ as follows:
\begin{itemize}
\item if $j=1$ then $B_1$ is the suffix of $A_0a_1$ of length
  $n_1$; otherwise, $B_1$ is the suffix of $a_{j-1}A_{j-1}a_j$ of length
  $n_j$;
\item $B_2 = a_jA_ja_{j+1}$;
\item if $j=k-1$ then $B_3$ is the prefix of $a_kA_k$ of length
  $n_{k-1}$; otherwise $B_3$ is the prefix of $a_{j+1}A_{j+1}a_{j+2}$ of length
  $n_j$.
\end{itemize}
The conditions on $n_{j-1}$, $n_j$, and $n_{j+1}$ ensure that $B_1$,
$B_2$, and $B_3$ can be defined.  However, we now see that $B_1 =
B_2^R = B_3$, where $B_1$ is either $(01)^{n_j/2}$ or $(10)^{n_j/2}$.
The word $w$ thus contains an instance of $xx^Rx$, and
hence is not in $L_0$.

Next we show that if $w \notin L_0$, then $f(w) \notin X$.  First note
that if $w$ has a factor $w'$ such that $f(w') \notin X$, then $f(w)
\notin X$.  We may therefore suppose that $w = vv^Rv$ and contains no
smaller instance of the pattern $xx^Rx$.  Then there are indices $i<j$
such that
\begin{itemize}
\item $v = A_0a_1 \cdots A_{i-1}a_i$,
\item $v^R = a_iA_i \cdots A_{j-1}a_j$, and
\item $v = a_jA_{j+1} \cdots a_kA_k$.
\end{itemize}
If $k=2$ we necessarily have $f(w) = (n_0,n_0,n_0) \notin X$, so
suppose $k>2$.  Since $w$ contains no smaller instance of $xx^Rx$, we
must have $n_0 < n_1$.  However, we also have $n_{j-1} = n_j = n_0$
and $n_{j+1} = n_1$.  Thus, we have $n_{j-1} = n_j < n_{j+1}$ and so
$f(w) \notin X$.
\end{proof}

We now turn to the enumeration of the sequences in $X$.  Let $v(n)$
denote the number of sequences in $X$ of weight $n$. A sequence
$(d_1,d_2,\ldots,d_m)$ in $X$ can be represented by a pair
$(\lambda, \mu)$ of partitions into distinct parts, where the
partition $\lambda$ gives the increasing part of the sequence and the
partition $\mu$, read in the reverse order, gives the decreasing part
of the sequence.  Recall that the generating function for partitions
into distinct parts is $\prod_{j=1}^\infty (1+q^j)$.  The generating
function for $X$ is thus \emph{almost} given by the square of this
function, i.e.,
\[
\sum_{n \geq 0}\tilde{u}(n)q^n = \prod_{j=1}^\infty (1+q^j)^2.
\]
Unfortunately, the quantity $\tilde{u}(n)$ double-counts every
sequence of Type~1: once for the case where the
maximal element of the sequence comes from $\lambda$, and once for the
case where it comes from $\mu$.  It follows then that for $n \geq 1$,
we have
\begin{equation}\label{bounds}
\tilde{u}(n)/2 \leq v(n) \leq \tilde{u}(n).
\end{equation}
Of course $\tilde{u}(n)/2$ is an underestimate for $v(n)$, since,
although it corrects for the double-counting of Type~1
sequences, it halves the number of
Type~2 sequences.  However, the number of
Type~2 sequences is relatively small, and
consequently $v(n)$ is rather close to $\tilde{u}(n)/2$.

Let $c(n)$ be the number of words in $L$ of length $n$.  From
\eqref{bounds} and Theorem~\ref{bijection}, we have the following.

\begin{corollary}
For $n \geq 1$, the number $c(n)$ of binary words of length $n$ that
avoid $xx^Rx$ satisfies
\[
\tilde{u}(n) \leq c(n) \leq 2\tilde{u}(n).
\]
\end{corollary}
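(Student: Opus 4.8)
The plan is to reduce the statement to the bounds in \eqref{bounds} by relating $c(n)$ to $v(n)$. First I would note that Theorem~\ref{bijection} already identifies the number of length-$n$ words in $L_0$ with $v(n)$, since $f$ is a bijection from those words onto the weight-$n$ sequences in $X$. The only remaining task is to pass from $L_0$ to all of $L$.

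For the key step, I would exploit the symmetry of the pattern under the complementation map $0 \leftrightarrow 1$. Writing $\bar{w}$ for the bitwise complement of $w$, one checks that $\overline{y^R} = \bar{y}^R$, so that $w = yy^Ry$ if and only if $\bar{w} = \bar{y}\,\bar{y}^R\,\bar{y}$. Hence complementation carries instances of $xx^Rx$ to instances of $xx^Rx$, and therefore $w \in L$ if and only if $\bar{w} \in L$. Since complementation also exchanges the words beginning with $0$ and those beginning with $1$, it restricts to a length-preserving bijection between the length-$n$ words of $L$ beginning with $0$ and those beginning with $1$.

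With this symmetry in hand, for $n \geq 1$ every length-$n$ word of $L$ begins with either $0$ or $1$ (the empty word being excluded), the two classes are equinumerous, and the words beginning with $0$ are precisely the length-$n$ words of $L_0$. Thus $c(n) = 2v(n)$. Multiplying the inequalities $\tilde{u}(n)/2 \leq v(n) \leq \tilde{u}(n)$ from \eqref{bounds} by $2$ then yields $\tilde{u}(n) \leq c(n) \leq 2\tilde{u}(n)$, which is exactly the claim.

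I do not expect a serious obstacle here: the only point requiring care is confirming that the complementation symmetry genuinely preserves avoidance of $xx^Rx$ (so that $L$ is closed under it), and this follows at once from the commutation of complementation with reversal noted above. Everything else is bookkeeping that combines the bijection of Theorem~\ref{bijection} with the elementary inequalities already established in \eqref{bounds}.
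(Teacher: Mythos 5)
Your proof is correct and follows exactly the argument the paper intends (the paper states the corollary follows ``from \eqref{bounds} and Theorem~\ref{bijection}'' without spelling out the details): Theorem~\ref{bijection} gives $v(n)$ words of $L$ of length $n$ starting with $0$, the complementation symmetry doubles this to $c(n) = 2v(n)$, and multiplying \eqref{bounds} by $2$ finishes the job. Your explicit verification that complementation commutes with reversal, and hence preserves avoidance of $xx^Rx$, is precisely the bookkeeping the paper leaves implicit.
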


Rhoades \cite{Rho14} has recently determined the asymptotics of
$\tilde{u}(n)$; viz.,
\[
\tilde{u}(n) =
\frac{\sqrt{3}}{(24n-1)^{3/4}}\exp\left(\frac{\pi}{6}\sqrt{24n-1}\right)\left(1+\frac{(\pi^2-9)}{4\pi(24n-1)^{1/2}}+O\left(\frac{1}{n}\right)\right).
\]
This shows, as claimed, that the growth of $c(n)$ is intermediate
between polynomial and exponential.  The following table gives the
values of $c(n)$ and $\tilde{u}(n)$ for small values of $n$.

\begin{center}
\begin{tabular}{|c|*{13}{c}|}
\hline
$n$ & 0 & 1 & 2 & 3 & 4 & 5 & 6 & 7 & 8 & 9 & 10 & 11 & 12 \\ \hline
$c(n)$ & 1 & 2 & 4 & 6 & 10 & 16 & 24 & 34 & 50 & 72 & 100 & 138 & 188
  \\ \hline
$\tilde{u}(n)$ & 1 & 2 & 3 & 6 & 9 & 14 & 22 & 32 & 46 & 66 & 93 & 128
                                                      & 176 \\
\hline
\end{tabular}
\end{center}

The sequences $c(n)$ and $\tilde{u}(n)$ are sequences A261204 and
A022567 respectively of \emph{The Online Encyclopedia of Integer Sequences},
available online at \url{http://oeis.org}~.  We would also like to
note that the number $c(n)$ grows significantly faster than the number
of binary words of length $n$ that avoid $xxx^R$, whose order of
growth was previously estimated to be, roughly speaking, on the order
of $e^{\log^2 n}$ \cite{CR15}.

\section{Non-context-freeness of the language $L$}

Recall that in previous work we showed that the language $S$
of binary words avoiding $xxx^R$ has intermediate growth \cite{CR15}.
Adamczewski \cite{Ada15} observed that this implies that $S$ is not a
context-free language, since it is well known that context-free
languages have either polynomial or exponential growth.  He asked
if there is a ``direct proof'' of the non-context-freeness of $S$.
This seems to be difficult; we have not been able to come up with such
a proof.  Indeed, it even seems to be rather difficult to give a direct
proof that $S$ is not a regular language.

Adamczewski's observation applies just as well to the language $L$ of
binary words avoiding $xx^Rx$:  the intermediate growth shown above
implies that $L$ is not context-free.  However, unlike for $S$, it is
relatively easy to give a direct proof that $L$ is not context-free.
First, we observe that since the class of context-free languages is
closed under intersection with regular languages and under finite
transduction, if the language $L$ is context-free then the language
\[
L \cap (01)^+(10)^+(01)^+(10)^+ = \{(01)^i(10)^j(01)^k(10)^\ell : (i<j
\text{ or } k<j) \text{ and } (j<k \text{ or } \ell<k)\}
\]
is context-free, and in turn, the language
\[
\{a^ib^jc^kd^\ell :  (i<j \text{ or } k<j) \text{ and } (j<k \text{ or
} \ell<k)\}
\]
is context-free.  However, one can show using Ogden's Lemma that the
latter is not context-free.  Consequently $L$ is not context-free.

\section{Acknowledgments}
Both authors are supported by NSERC Discovery Grants.

\end{document}